\documentclass[12pt,oneside]{article}

\usepackage{amsmath}
\usepackage{amssymb}
\usepackage{color}
\usepackage{rotating}

\usepackage{graphicx}
\usepackage{subfigure}

\usepackage{url}

\author{Hui Zhou,
\\ \footnotesize{School of Mathematical Sciences, Peking University, Beijing, 100871, P.~R.~China.}
\\ \scriptsize{School of Mathematics and Statistics, Lanzhou University, Lanzhou, Gansu 730000, P. R. China}
\\ \footnotesize{zhouhpku17@pku.edu.cn, huizhou@math.pku.edu.cn, zhouhlzu06@126.com.}
\vspace{2em}
\\ Liufeng Xu, Yang Cui,
\\ \scriptsize{School of Mathematics and Statistics, Lanzhou University, Lanzhou, Gansu 730000, P. R. China}
\\ \footnotesize{798245503@qq.com, 360048115@qq.com}
\vspace{2em}
\\ Qi Ding,
\\ \scriptsize{School of Mathematics and Statistics, Lanzhou University, Lanzhou, Gansu 730000, P. R. China}
\\ \scriptsize{Yonyou Network Technology Co., Ltd., Beijing 100094, China}
\\ \footnotesize{892127976@qq.com, dingqi0@yonyou.com}
\vspace{2em}
\\ Yanfeng Luo, Xing Gao and Dong Yang
\\ \scriptsize{School of Mathematics and Statistics, Lanzhou University, Lanzhou, Gansu 730000, P. R. China}
\\ \footnotesize{luoyf@lzu.edu.cn, gaoxing05@lzu.edu.cn, gaoxing@lzu.edu.cn}
}


\title{\Large Hamiltonian decomposition of the Cayley graph on the dihedral group $D_{2p}$ where $p$ is a prime~\footnote{This work is supported by a project funded by China Postdoctoral Science Foundation, grant no. 2017M620491.
}}

\def\D{{\sf D}}

\def\gcd{{\rm gcd}}

\newtheorem{theorem}{Theorem}

\newtheorem{lemma}[theorem]{Lemma}

\newtheorem{definition}[theorem]{Definition}

\newcommand*{\QEDA}{\hfill\ensuremath{\blacksquare}}  
\newenvironment{proof}[1][\hspace{2ex}\textbf{\textit{Proof}.}\hspace{1ex}]{\begin{trivlist}\item[\hskip \labelsep {\bfseries #1}]}{\QEDA\end{trivlist}}

\begin{document}

\maketitle

\begin{abstract}
In this note, we give the Hamiltonian decomposition of the Cayley graph on the dihedral group $D_{2p}$ where $p$ is a prime.

\end{abstract}

\textbf{Keywords}: Hamiltonian decomposition; Cayley graph; dihedral group.

\textbf{MSC}: 05C45, 05C25.



\section{Introduction}\label{sec Introduction}

Let $G$ be a group and let $S$ be a subset of $G$. Let $S^{-1}=\{s^{-1}\mid s\in S\}$ and let $\langle S\rangle$ be the subgroup of $G$ generated by $S$. If $S=\{a\}$, then we use $\langle a\rangle$ to denote $\langle S\rangle$. We suppose $1\not\in S$, $S=S^{-1}$ and $\langle S\rangle=G$. The Cayley graph on $G$ with respect to $S$ is a graph $\Gamma=Cay(G,S)$ such that the vertex-set is $G$ and two vertices $g\in G$ and $h\in G$ are adjacent in $\Gamma$ if and only if $hg^{-1}\in S$, i.e., there exists $s\in S$ such that $h=sg$ (this edge is also called an $s$-edge or $s^{-1}$-edge). Let $T$ be a nonempty set of $S$ such that $T=T^{-1}$. An edge is called a $T$-edge if there exist $t\in T$ such that this edge is a $t$-edge. The Cayley graph $\Gamma=Cay(G,S)$ is $G$-vertex-transitive, $G$-regular, and it is a regular graph of valency $|S|$.
A Hamilton cycle of a graph is a cycle of length the order of the graph. 
Alspach~\cite{Alspach1984Researchproblem59} stated the problem of Hamilton decompositions on Cayley graphs.

\begin{definition}[Hamilton decomposition]\label{def Hamilton ecomposition}
Let $k\geqslant 2$. A regular graph of valency $2k$ is said to have a Hamilton decomposition if its edge-set can be partitioned into $k$ Hamilton cycles. A regular graph of valency $2k-1$ is said to have a Hamilton decomposition if its edge-set can be partitioned into $k-1$ Hamilton cycles and a perfect matching.
\end{definition}

It was conjectured that every connected Cayley graph on an abelian group has a Hamilton decomposition. Many researchers have studied on this conjecture. 
For nonabelian groups, the dihedral groups come out first. Here we consider Hamilton decompositions of Cayley graphs on dihedral groups.
Let $n\geqslant 2$. The dihedral group $\D_{2n}$ is the group generated by two elements $\alpha$ and $\beta$ satisfying the relations \begin{center}$\alpha^n=\beta^2=1$ and $\beta\alpha\beta=\alpha^{-1}$.\end{center} 

Alspach~\cite{Alspach1979vt2pH} proved that, with the exception of the Petersen graph, every connected vertex-transitive graph of order $2p$, where $p$ is a prime, has a Hamilton cycle. Since the Petersen graph is not a Cayley graph, Alspach's result implies Holszty\'{n}ski and Strube's result~\cite[Proposition~6.1]{HolsztynskiStrube1978ConjectureDihedralgroupH}.

\begin{theorem}\cite{HolsztynskiStrube1978ConjectureDihedralgroupH}\label{thm D2p is H}
Let $p$ be a prime. Then every connected Cayley graph on $\D_{2p}$ has a Hamilton cycle.
\end{theorem}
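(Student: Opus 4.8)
The plan is to run a short case analysis on the connection set $S$, exploiting that $p$ is prime so that every nontrivial rotation generates the full rotation subgroup $\langle\alpha\rangle$. Write the elements of $\D_{2p}$ as rotations $\alpha^i$ and reflections $\alpha^i\beta$ with $i\in\mathbb{Z}_p$, and note that each reflection is an involution while the rotations occurring in $S$ come in inverse pairs $\{\alpha^a,\alpha^{-a}\}$. Since $\langle S\rangle=\D_{2p}$ but the rotations alone generate only $\langle\alpha\rangle$, the set $S$ must contain at least one reflection. I would therefore split into two cases according to whether $S$ contains at least two reflections or exactly one, and in each case I would find a Hamilton cycle already inside a carefully chosen spanning subgraph on a small piece of $S$.

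If $S$ contains two distinct reflections $s_1=\alpha^a\beta$ and $s_2=\alpha^b\beta$ with $a\neq b$, I would simply discard every other generator. Because $s_1s_2=\alpha^{a-b}$ is a nontrivial rotation it generates $\langle\alpha\rangle$, so $\langle s_1,s_2\rangle=\D_{2p}$ and hence $Cay(\D_{2p},\{s_1,s_2\})$ is a connected spanning subgraph of $\Gamma$. As $s_1,s_2$ are involutions, every vertex has exactly one $s_1$-edge and one $s_2$-edge, so this subgraph is $2$-regular and connected, i.e. a single cycle of length $2p$. That cycle is the desired Hamilton cycle of $\Gamma$.

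If $S$ contains exactly one reflection $\alpha^c\beta$, then $S$ must also contain a rotation; I would fix one inverse pair $\{\alpha^a,\alpha^{-a}\}\subseteq S$ and discard everything else. Since $p$ is prime and $a\neq0$ we have $\gcd(a,p)=1$, so on the $p$ rotations the edges from $\alpha^{\pm a}$ form a single $p$-cycle $R_0R_aR_{2a}\cdots$, and likewise on the $p$ reflections. The edges from $\alpha^c\beta$ give $R_i\sim F_{c-i}$, a perfect matching between the two $p$-cycles. The key point is that this matching is an isomorphism of the two cycles: the cycle-edge $R_iR_{i+a}$ is carried to $F_{c-i}F_{c-i-a}$, again a cycle edge. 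After relabelling both cycles so that their common step is $1$ and the matching reads $R_i\sim F_i$, the subgraph $Cay(\D_{2p},\{\alpha^{\pm a},\alpha^c\beta\})$ is exactly the prism $C_p\times K_2$, and the walk $R_0R_1\cdots R_{p-1}\,F_{p-1}F_{p-2}\cdots F_0\,R_0$ is an explicit Hamilton cycle, which lives in $\Gamma$ because $\{\alpha^{\pm a},\alpha^c\beta\}\subseteq S$.

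I expect the only delicate point to be this one-reflection case: two $p$-cycles joined by a perfect matching need not be Hamiltonian in general (the Petersen graph is exactly such a configuration), so the argument genuinely relies on checking that here the matching is a cycle-isomorphism, which forces the prism rather than a Petersen-type obstruction; primality enters precisely to guarantee that a single rotation pair already yields a spanning pair of $p$-cycles. The remaining bookkeeping — that the two cases are exhaustive, and that the small case $p=2$ (where $\D_4$ has order $4$ and every connected Cayley graph is a $4$-cycle or $K_4$) is settled by direct inspection — is routine.
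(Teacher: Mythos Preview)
Your argument is correct. The two-reflection case is clean: $Cay(\D_{2p},\{s_1,s_2\})$ with $s_1,s_2$ distinct involutions is connected (since $s_1s_2=\alpha^{a-b}$ generates $\langle\alpha\rangle$ when $p$ is prime) and $2$-regular, hence a single $2p$-cycle. The one-reflection case is also right: the matching $R_i\mapsto F_{c-i}$ sends the rotation-cycle edge $\{R_i,R_{i+a}\}$ to the reflection-cycle edge $\{F_{c-i},F_{c-i-a}\}$, so it is a cycle isomorphism and the spanning subgraph is the prism $C_p\times K_2$, for which your walk is a Hamilton cycle. Your remark that this is exactly where the Petersen obstruction is ruled out is on point.

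The paper, however, does not prove this statement at all: it simply records it as a known theorem of Holszty\'{n}ski and Strube, and observes in the surrounding text that it also follows from Alspach's result that every connected vertex-transitive graph of order $2p$ other than the Petersen graph is Hamiltonian, together with the fact that the Petersen graph is not a Cayley graph. So your route is genuinely different and considerably more elementary: you avoid invoking the general vertex-transitive theorem and instead exhibit, in each case, a small generating subset on which the Cayley subgraph is already a Hamilton cycle or a prism. What the paper's citation buys is brevity and a stronger statement (all vertex-transitive graphs of order $2p$, not just Cayley graphs on $\D_{2p}$); what your argument buys is a self-contained proof that makes the role of primality completely transparent and that in fact dovetails with the constructions the paper later uses (Lemmas~\ref{lem one involution} and~\ref{lem all involution}) for the Hamilton \emph{decomposition}.
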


Alspach~\cite{Alspach1980vt2pHD} showed that every connected vertex-transitive graph of order $2p$, where $p$ is a prime and $p\equiv 3\pmod 4$, has a Hamilton decomposition. In this note, we mainly give Hamilton decompositions of Cayley graphs on $\D_{2p}$ where $p$ is a prime. More precisely, we prove the following theorem.

\begin{theorem}\label{thm D2p is HD}
Let $p$ be a prime. Then every connected Cayley graph on $\D_{2p}$ has a Hamilton decomposition.
\end{theorem}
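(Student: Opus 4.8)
The plan is to read off the structure of $S$ and reduce everything to two splicing lemmas. Since $\D_{2p}$ consists of the $p$ rotations $1,\alpha,\dots,\alpha^{p-1}$ and the $p$ reflections $\beta,\alpha\beta,\dots,\alpha^{p-1}\beta$, and $S=S^{-1}$ with $1\notin S$, the set $S$ is made of $m$ reflections $\alpha^{i}\beta$ (each its own inverse) together with $r$ inverse-pairs $\{\alpha^{a},\alpha^{-a}\}$ of rotations, so the valency is $m+2r$. Because $p$ is prime, $\langle S\rangle=\D_{2p}$ holds exactly when $m\ge 2$, or $m=1$ and $r\ge 1$. Two elementary observations drive the proof: a reflection $\alpha^{i}\beta$ contributes a perfect matching $M_i$ (it joins each rotation $\alpha^{k}$ to the reflection $\alpha^{i-k}\beta$), while a rotation pair $\{\alpha^{\pm a}\}$ contributes a $2$-factor that is the disjoint union of a $p$-cycle $C^{\mathrm{rot}}$ on the rotations and a $p$-cycle $C^{\mathrm{ref}}$ on the reflections. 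Moreover the union $M_i\cup M_j$ of two distinct reflection-matchings is a single Hamilton cycle, since consecutive steps shift the rotation index by the fixed nonzero amount $i-j$ and $p$ is prime.

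The engine is a prism-surgery move: given a rotation pair with its two $p$-cycles $C^{\mathrm{rot}},C^{\mathrm{ref}}$ and a matching $M$, remove one suitable edge from each of $C^{\mathrm{rot}},C^{\mathrm{ref}}$ and insert the two $M$-edges meeting their endpoints. As $C^{\mathrm{rot}}\cup C^{\mathrm{ref}}\cup M$ is the prism $C_p\,\square\,K_2$, the result is a single Hamilton cycle using all but two rotation edges of the pair together with exactly two edges of $M$. With this move I would prove two lemmas. Lemma A: one matching together with $r$ rotation pairs decomposes into $r$ Hamilton cycles and one perfect matching. Lemma B: two matchings together with $r$ rotation pairs decompose into $r+1$ Hamilton cycles. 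Granting these, the theorem follows by cases. If $m$ is even (hence $m\ge 2$), apply Lemma B to two matchings and all $r$ rotation pairs, and pair the remaining $m-2$ matchings into $(m-2)/2$ Hamilton cycles via the $M_i\cup M_j$ fact, giving $r+m/2$ Hamilton cycles. If $m$ is odd, apply Lemma A to one matching and all $r$ rotation pairs (this produces the single perfect matching) and pair the remaining $m-1$ matchings into $(m-1)/2$ Hamilton cycles; the counts match the definition of a Hamilton decomposition in both parities.

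Lemma A is the easy one and I expect it to go through cleanly. Splice the $j$-th rotation pair into a Hamilton cycle using the two $M$-edges at rotation vertices $\alpha^{k_j},\alpha^{k_j+a_j}$, choosing the $k_j$ so that the $2r$ rotation indices involved are distinct, which is possible since $2r\le p-1$. The $2r$ used $M$-edges and the $2r$ rotation edges that get ejected cover exactly the same $4r$ vertices, so the leftover $p-2r$ matching edges together with the ejected rotation edges form a perfect matching. No global connectivity condition is needed here, since the leftover is a matching rather than a cycle, and each spliced cycle is individually Hamiltonian by the prism surgery.

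The real difficulty, and where I expect the main obstacle, is Lemma B. After the $r$ splices the leftover is the backbone $M_1\cup M_2$ with $2r$ of its $M_1$-edges deleted and $2r$ rotation edges inserted, and this must be a \emph{single} Hamilton cycle, not a union of several cycles. Tracking how each splice reverses an arc of the backbone, one finds that the outcome is a single cycle precisely when the $2r$ splice locations form a non-crossing chord pattern, equivalently when the two $M_1$-edges cut at each step are traversed with the same orientation. The crux is therefore to choose the offsets $k_j$, exploiting the freedom in the sign of each $a_j$ and the fact that the $r$ rotation pairs yield pairwise distinct chord lengths in $\{1,\dots,(p-1)/2\}$, so that the chords can be arranged into a laminar (non-crossing) family; I would try to secure this by a recursive placement argument and then verify connectivity of the resulting backbone directly. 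Proving that such a non-crossing arrangement always exists for every admissible multiset of lengths is the step I anticipate being genuinely delicate. Finally, the small degenerate cases, in particular $p=2$ where $\D_4$ is abelian, would be handled by direct inspection.
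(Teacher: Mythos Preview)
Your proposal is correct and follows essentially the same route as the paper: your Lemmas~A and~B are precisely the paper's Lemmas~\ref{lem one involution} and~\ref{lem two involutions}, proved by the same prism-surgery with $4$-cycles, and the main theorem is deduced by the identical parity split on the number $m=|S_2|$ of reflections, pairing the leftover reflection matchings into Hamilton cycles. The step you flag as delicate---finding a non-crossing placement of the $r$ splice intervals in Lemma~B---is dispatched in the paper by an explicit construction rather than recursion: sort the step sizes $i_1<\cdots<i_s$, split them by index parity into two groups, and nest each group concentrically inside one of the two halves $\{0,\dots,(p-1)/2\}$ and $\{(p+1)/2,\dots,p-1\}$ of $\mathbb{Z}_p$, which is possible since $i_t\ge t$ and $s+i_s\le p-1$.
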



\section{Proof of Theorem~\ref{thm D2p is HD}}\label{sec proof D2p is HD}

Let $A$ and $B$ be two set. We use $A\Delta B$ to denote the symmetric difference of $A$ and $B$. For two graphs $\Sigma$ and $\Gamma$, the symbol $\Sigma\Delta\Gamma$ means the symmetric difference of the edge sets of graphs $\Sigma$ and $\Gamma$.

We first give results on two special cases.

\begin{lemma}\label{lem one involution}
Let $p\geqslant 3$ be an odd prime and let $\Gamma=Cay(\D_{2p},S)$ such that $S\subseteq \D_{2p}$, $1\not\in S$, $S=S^{-1}$ and $\langle S\rangle=\D_{2p}$. Let $S_2=S\cap \beta\langle\alpha\rangle$. If $|S_2|=1$, then $\Gamma$ has a Hamilton decomposition.
\end{lemma}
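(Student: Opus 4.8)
The plan is to coordinatize the two cosets and reduce the whole problem to a packing statement inside $\mathbb{Z}_p$. Write $S_1 = S\cap\langle\alpha\rangle = \{\alpha^{\pm d_1},\dots,\alpha^{\pm d_m}\}$ and $S_2=\{\beta\alpha^j\}$; since every element of $\beta\langle\alpha\rangle$ is an involution, $S$ is forced to have odd valency $|S| = 2m+1$, so by Definition~\ref{def Hamilton ecomposition} I must produce $m$ Hamilton cycles together with one perfect matching (here $m\ge 1$, because $\langle S\rangle = \D_{2p}$ forces $S_1\neq\emptyset$). I identify the coset $\langle\alpha\rangle$ with a copy of $\mathbb{Z}_p$ via $\alpha^x\mapsto x$ and the coset $\beta\langle\alpha\rangle$ with a second copy via $\beta\alpha^x\mapsto x$. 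After a harmless reindexing of the second copy by $x\mapsto x-j$, the unique reflection $\beta\alpha^j$ becomes the ``identity'' matching $M$ joining $x$ in the first copy to $x$ in the second. Because $p$ is prime, each $d_i$ is a unit modulo $p$, so the $\alpha^{\pm d_i}$-edges trace out a single $p$-cycle $C_i^0$ in the first copy and a single $p$-cycle $C_i^1$ in the second, both with edge set $\{x,x+d_i\}$; thus the rotation edges split as $2m$ disjoint $p$-cycles and the reflection edges form the perfect matching $M$.

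For $m=1$ the graph $\Gamma$ is exactly the prism $C_p\,\square\,K_2$ (a $p$-cycle in each copy joined by the rungs $M$), and I would invoke the standard decomposition of this prism into one Hamilton cycle and one perfect matching: traverse $C_1^0$ from $u$ to $u+d_1$ the long way, cross a rung, traverse $C_1^1$ back, and cross a second rung, leaving as the matching the two deleted rotation edges together with the unused rungs. For general $m$ I would apply this merge in parallel: for each $i$ choose a cut point $u_i$, delete the edge $\{u_i,u_i+d_i\}$ from both $C_i^0$ and $C_i^1$, and splice the two resulting paths into a single Hamilton cycle $H_i$ using the two rungs of $M$ sitting at positions $u_i$ and $u_i+d_i$. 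Since a single reflection joins only equal coordinates, the two rungs are forced to sit at the two path endpoints, which is why the cut must be made at the same position $u_i$ in both copies.

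The accounting then works provided the rungs used by different $H_i$ are distinct. Concretely, each $H_i$ consumes the two rungs at $u_i$ and $u_i+d_i$ and leaves behind the two deleted rotation edges $\{u_i,u_i+d_i\}$, one in each copy; the leftover edges are therefore the rungs at all positions outside $\{u_i,u_i+d_i : 1\le i\le m\}$ together with these $2m$ deleted rotation edges. A direct check shows this leftover set covers every vertex exactly once --- hence is the required perfect matching --- precisely when the $2m$ values $u_1,u_1+d_1,\dots,u_m,u_m+d_m$ are pairwise distinct in $\mathbb{Z}_p$. So the entire statement reduces to the following packing claim: given nonzero $d_1,\dots,d_m\in\mathbb{Z}_p$ with $2m\le p-1$, there exist $u_1,\dots,u_m$ making the pairs $\{u_i,u_i+d_i\}$ pairwise disjoint.

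Establishing this packing claim is the main obstacle. I would prove it by placing the dominoes $\{u_i,u_i+d_i\}$ one at a time: as long as the current free set $F$ has more than $(p-1)/2$ cells, it cannot be independent with respect to the difference $d_{k+1}$ --- an independent set in the $p$-cycle generated by $d_{k+1}$ has at most $(p-1)/2$ vertices --- so a fresh disjoint domino always exists and the greedy step goes through. Since $|F|=p-2k$, this settles every step with $k<(p+1)/4$, i.e. all but the densest regime. The remaining dense cases, where the free set is forced below $(p-1)/2$, I would handle by a local augmentation: when the greedy process stalls, relocate one already-placed domino so as to liberate a $d_{k+1}$-pair, using that the covered cells then form a vertex cover of the $d_{k+1}$-cycle. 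Carrying out this augmentation so that it always terminates --- equivalently, verifying the packing claim in full generality --- is the delicate point; once it is in place, the cycles $H_1,\dots,H_m$ together with the leftover perfect matching prove the lemma.
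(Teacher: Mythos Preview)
Your reduction is exactly the paper's: for each rotation pair $\{\alpha^{d_i},\alpha^{-d_i}\}$ the paper also takes the two $p$-cycles, deletes one edge from each at a chosen position, and splices the resulting paths into a Hamilton cycle $H_i$ via two rungs of the reflection matching $M$; the leftover is a perfect matching precisely when the $2m$ rung positions $u_i,u_i+d_i$ are pairwise distinct in $\mathbb{Z}_p$. So the lemma really does come down to your packing claim, and the bookkeeping you give for the leftover matching is correct.

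The genuine gap is that you do not prove the packing claim. Your greedy step is valid for $k<(p+1)/4$, but the ``local augmentation'' you propose for the dense regime $(p+1)/4\le m\le (p-1)/2$ is a hope, not an argument: you give neither a reason why a single relocation frees a $d_{k+1}$-pair nor any termination measure if several relocations are needed, and you yourself flag this as ``the delicate point''. The paper addresses this step not with a process but with an explicit choice of cut points, using structure you discard when you pose the claim for arbitrary nonzero $d_i$: here the $d_i$ are \emph{distinct} elements of $\{1,\dots,(p-1)/2\}$. (After ordering $d_1<\cdots<d_m$, the paper takes, in your coordinates, $u_t=t$ and asserts the resulting $4$-cycles are vertex-disjoint; that specific choice is actually also defective---try $d_1=1$, $d_2=2$, which gives pairs $\{1,2\}$ and $\{2,4\}$---but the moral is that an explicit construction, not a greedy-with-repair scheme, is the right tool.) A one-line construction that closes your gap cleanly: set $u_i\equiv -d_i\cdot 2^{-1}\pmod p$, so that $\{u_i,u_i+d_i\}=\{-d_i/2,\,d_i/2\}$; since the $2m$ residues $\pm d_1,\dots,\pm d_m$ are pairwise distinct and nonzero, these $m$ pairs are pairwise disjoint in $\mathbb{Z}_p\setminus\{0\}$, and your argument is then complete.
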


\begin{proof}
Suppose $|S_2|=1$. Since $S=S^{-1}$, then $|S\cap \langle\alpha\rangle|$ is even. So we can write $S=\{\alpha^{i_1},\alpha^{i_2},\ldots,\alpha^{i_s},\alpha^{p-i_s},\ldots,\alpha^{p-i_2},\alpha^{p-i_1},b\}$ where $S_2=\{b\}$, $s\geqslant 1$ and $1\leqslant i_1<i_2<\cdots<i_s<p-i_s<\cdots<p-i_2<p-i_1\leqslant p-1$. We know $s+i_s\leqslant p-1$, and the $b$-edges form a perfect matching $M$ of $\Gamma$. Let $1\leqslant t\leqslant s$ and $R_t=\{\alpha^{i_t},\alpha^{p-i_t}\}$. Then $\langle R_t\rangle=\langle\alpha\rangle$. The $R_t$-edges in $\Gamma$ form two cycles: one cycle $C_{t,\alpha}$ of length $p$ on $\langle\alpha\rangle$ and the other cycle $C_{t,\beta}$ of length $p$ on $\beta\langle\alpha\rangle$. Consider the cycle $C_{t}=(\alpha^t,\alpha^{t+i_t},b\alpha^{t+i_t},b\alpha^t)$ of length four. Then $H_t=(C_{t,\alpha}\cup C_{t,\beta})\Delta C_t$ is a Hamilton cycle of $\Gamma$ and $M\Delta C_t$ is also a perfect matching of $\Gamma$. Note that the cycles $C_1,C_2,\ldots,C_t$ are vertex-disjoint. Hence $M_C=M\Delta\left(\bigcup\limits_{j=1}^sC_j\right)$ is also a perfect matching. The edges of $\Gamma$ is partitioned into Hamilton cycles $H_1,H_2,\ldots,H_s$ and a perfect matching $M_C$, i.e., $\Gamma$ has a Hamilton decomposition.
\end{proof}

\begin{lemma}\label{lem all involution}
Let $p\geqslant 3$ be an odd prime and let $\Gamma=Cay(\D_{2p},S)$ such that $S\subseteq \D_{2p}$, $1\not\in S$, $S=S^{-1}$ and $\langle S\rangle=\D_{2p}$. Let $S_1=S\cap \langle\alpha\rangle$. If $|S_1|=0$, then $\Gamma$ has a Hamilton decomposition.
\end{lemma}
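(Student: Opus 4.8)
The plan is to exploit the fact that, because $S_1=\emptyset$, every element of $S$ lies in the coset $\beta\langle\alpha\rangle$ and is therefore an involution, so $\Gamma$ is bipartite with parts $\langle\alpha\rangle$ and $\beta\langle\alpha\rangle$, each of size $p$. Writing $S=\{\beta\alpha^{j_1},\ldots,\beta\alpha^{j_k}\}$, I first record that connectivity $\langle S\rangle=\D_{2p}$ forces $k\geqslant 2$: the product of any two reflections satisfies $\beta\alpha^{j}\cdot\beta\alpha^{j'}=\alpha^{j'-j}$, whereas a single involution generates only a group of order $2$. Relabelling $\alpha^i\mapsto i$ on the first part and $\beta\alpha^i\mapsto i$ on the second part identifies $\Gamma$ with the bipartite circulant in which top vertex $i$ is joined to bottom vertices $i+j_1,\ldots,i+j_k$ modulo $p$, which makes the edge structure transparent.

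The key step is the behaviour of one pair of generators. For a single reflection $b=\beta\alpha^{j}$, the $b$-edges form a perfect matching $M_b$, since each vertex has a unique $b$-neighbour. For two distinct reflections $b=\beta\alpha^{j}$ and $b'=\beta\alpha^{j'}$, I claim $M_b\cup M_{b'}$ is a single Hamilton cycle. Indeed, the relevant subgraph is $Cay(\D_{2p},\{b,b'\})$, the Cayley graph of the dihedral group on two involutions whose product $bb'=\alpha^{j'-j}$ has order $p$; here I use that $p$ is prime and $j\neq j'$, so $j'-j$ generates $\mathbb{Z}_p$. Such a graph is the standard $2p$-cycle $1,b,bb',bb'b,\ldots$ obtained by alternating $b$ and $b'$. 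This is precisely the point where primality of $p$ is essential.

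With this claim the decomposition is immediate by pairing generators. If $k=2m$ is even, I partition $S$ into $m$ pairs; each pair contributes one Hamilton cycle by the claim, and since each generator supplies exactly one perfect matching these cycles partition all edges, giving $m$ Hamilton cycles. If $k=2m+1$ is odd, I pair $2m$ of the generators into $m$ Hamilton cycles as before and keep the matching of the remaining reflection as a perfect matching, which is exactly the required form of a Hamilton decomposition in the odd-valency case.

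I expect no serious obstacle: the only thing needing care is the single-pair claim, namely that two reflections whose product has order $p$ yield one $2p$-cycle rather than several shorter cycles, and this is guaranteed by the primality of $p$. The pairing is otherwise entirely free, so any grouping of the generators into pairs (plus one leftover when $k$ is odd) works.
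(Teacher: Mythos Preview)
Your proposal is correct and follows essentially the same approach as the paper: both arguments observe that $S$ consists entirely of reflections, pair them up, use the fact that any two distinct reflections generate all of $\D_{2p}$ (since $p$ is prime) so that each pair yields a Hamilton cycle, and leave one reflection as a perfect matching when $|S|$ is odd. Your write-up is somewhat more detailed in justifying why a pair of reflections gives a single $2p$-cycle, but the underlying argument is identical.
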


\begin{proof}
Suppose $|S_1|=0$. Then $S\subseteq \beta\langle\alpha\rangle$, and so we can write the set $S=\{\beta\alpha^{i_1},\beta\alpha^{i_2},\ldots,\beta\alpha^{i_s}\}$ where $s\geqslant 2$ and $0\leqslant i_1<i_2<\cdots<i_s\leqslant p-1$. Let $1\leqslant t\leqslant \lfloor\frac{s}{2}\rfloor$ and let $Q_t=\{\beta\alpha^{i_{2t-1}},\beta\alpha^{i_{2t}}\}$. Then $\D_{2p}=\langle Q_t\rangle$, and so the $Q_t$-edges form a Hamilton cycle $C_t$ of $\Gamma$. Note that the $\beta\alpha^{i_s}$-edges form a perfect matching $M$ of $\Gamma$. If $s$ is even, then the edges of $\Gamma$ is partitioned into Hamilton cycles $C_1,C_2,\ldots,C_t$. If $s$ is odd, then the edges of $\Gamma$ is partitioned into Hamilton cycles $C_1,C_2,\ldots,C_t$ and a perfect matching $M$. Thus $\Gamma$ has a Hamilton decomposition.
\end{proof}


Now we consider the Hamilton decomposition of a special tetravalent Cayley graph on the dihedral group $\D_{2n}$.

\begin{lemma}\label{lem tetravalent}
Let $n\geqslant 3$. Let $\Gamma=Cay(\D_{2n},S)$ where $S=\{\alpha^i,\alpha^{-i},\beta\alpha^j,\beta\alpha^k\}$, $1\leqslant i\leqslant n-1$, $1\leqslant j<k\leqslant n-1$ and $\gcd(i,n)=\gcd(k-j,n)=1$. Then $\Gamma$ has a Hamilton decomposition.
\end{lemma}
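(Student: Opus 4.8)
The plan is to split the $4n$ edges of $\Gamma$ into the two ``rotation'' cycles together with the two matchings coming from the involutions, and then repair this into two Hamilton cycles by a single symmetric-difference move with a carefully chosen $4$-cycle. First I would record the edge structure. Since $\gcd(i,n)=1$ we have $\langle\{\alpha^i,\alpha^{-i}\}\rangle=\langle\alpha\rangle$, so the $\{\alpha^i,\alpha^{-i}\}$-edges form exactly two $n$-cycles: a cycle $C_\alpha$ on the rotations $\langle\alpha\rangle$ and a cycle $C_\beta$ on the reflections $\beta\langle\alpha\rangle$. The involution $\beta\alpha^j$ gives a perfect matching $M_j$ joining each rotation $\alpha^m$ to the reflection $\beta\alpha^{m+j}$, and likewise $\beta\alpha^k$ gives a perfect matching $M_k$ joining $\alpha^m$ to $\beta\alpha^{m+k}$. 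These four families exhaust the edges of $\Gamma$.

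Next I would build a first Hamilton cycle out of the two matchings. Following an $M_j$-edge and then an $M_k$-edge sends $\alpha^m$ to $\alpha^{m-(k-j)}$; since $\gcd(k-j,n)=1$ this orbit runs through every rotation, and interleaving the reflections shows that $H_1:=M_j\cup M_k$ is a single $2n$-cycle, i.e. a Hamilton cycle. Setting $H_2:=C_\alpha\cup C_\beta$ then partitions all edges of $\Gamma$ into $H_1$ and $H_2$, but $H_2$ is the disjoint union of two $n$-cycles rather than a Hamilton cycle, so this is not yet a Hamilton decomposition and must be repaired.

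To repair it I would use the $4$-cycle $Z=(\alpha^0,\ \alpha^i,\ \beta\alpha^{i+j},\ \beta\alpha^j)$, whose four edges are the $C_\alpha$-edge $\{\alpha^0,\alpha^i\}$, the $M_j$-edge $\{\alpha^i,\beta\alpha^{i+j}\}$, the $C_\beta$-edge $\{\beta\alpha^{i+j},\beta\alpha^j\}$, and the $M_j$-edge $\{\beta\alpha^j,\alpha^0\}$; thus $Z$ alternates between edges of $H_2$ and edges of $H_1$ (and is a genuine $4$-cycle because $1\leqslant i\leqslant n-1$). I would then set $H_1':=H_1\Delta Z$ and $H_2':=H_2\Delta Z$, which again partition all edges of $\Gamma$ since $\Delta Z$ merely exchanges the four edges of $Z$ between the two classes. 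In $H_2'$ the two deleted cycle-edges break $C_\alpha$ and $C_\beta$ each into a path, while the two inserted $M_j$-edges bridge the four endpoints so as to splice the two paths into one $2n$-cycle; this is the easy half and is readily seen to be a Hamilton cycle.

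The hard part will be checking that $H_1'$ is again a single Hamilton cycle rather than two smaller cycles. Deleting the two $M_j$-edges $\{\alpha^0,\beta\alpha^j\}$ and $\{\alpha^i,\beta\alpha^{i+j}\}$ from the cycle $H_1$ cuts it into two paths, and the decomposition succeeds precisely when the inserted edges $\{\alpha^0,\alpha^i\}$ and $\{\beta\alpha^j,\beta\alpha^{i+j}\}$ each join an endpoint of one path to an endpoint of the other (a ``crossing'' reconnection) rather than closing off each path separately. To verify this I would locate the four vertices in the cyclic order of $H_1$: writing $d=k-j$, the rotation $\alpha^{-md}$ occupies position $2m$ and the reflection $\beta\alpha^{j-md}$ position $2m+1$, so $\alpha^0,\beta\alpha^j$ sit at positions $0,1$ while $\alpha^i,\beta\alpha^{i+j}$ sit at positions $2m_1,2m_1+1$ where $m_1\equiv -i\,d^{-1}\pmod n$. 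Since $1\leqslant i\leqslant n-1$ forces $m_1\not\equiv 0$, the two cut edges are non-adjacent, the endpoints $\alpha^0,\alpha^i$ lie on opposite paths as do $\beta\alpha^j,\beta\alpha^{i+j}$, and both inserted edges are therefore crossing. Hence $H_1'$ is a Hamilton cycle and $\{H_1',H_2'\}$ is the desired Hamilton decomposition.
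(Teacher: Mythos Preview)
Your proof is correct and follows essentially the same strategy as the paper: split the edges into the two rotation $n$-cycles and the two involution matchings, then repair via symmetric difference with a single $4$-cycle that alternates between a rotation edge and an $M_j$-edge. The paper's $4$-cycle $(a^{-1},1,b,ba^{-1})$ is just a shift of your $Z$, and its $H_a$, $H_b$ correspond to your $H_2'$, $H_1'$.

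The only real difference is in how the ``hard half'' is verified. The paper changes coordinates to $a=\alpha^i$, $b=\beta\alpha^j$, $\beta\alpha^k=ba^s$, and then explicitly walks through $H_b$, listing the vertices reached from $1$ and from $a^{-1}$ until the two extra edges $\{1,a^{-1}\}$ and $\{b,ba^{-1}\}$ close the walk, using the minimality of certain exponents $r,t$ with $a^{rs+1}=a^{ts-1}=1$. Your argument is more structural: you first observe that $H_1=M_j\cup M_k$ is already a Hamilton cycle (because $\gcd(k-j,n)=1$), locate the four endpoints by their positions $0,1,2m_1,2m_1+1$ on that cycle, and check that the two inserted edges are crossing. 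This position-on-the-cycle analysis is cleaner and avoids the somewhat ad hoc integer computations in the paper; conversely, the paper's explicit walk makes the Hamilton cycle $H_b$ completely concrete. Both arrive at the same decomposition.
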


\begin{proof}
Let $a=\alpha^i$, $b=\beta\alpha^j$. Then $\D_{2n}=\langle a,b\rangle$. Since $\gcd(i,n)=1$, the congruence equation $ix\equiv k-j\pmod n$ has a solution $x=s$ with $0\leqslant s\leqslant n-1$. Note that $\gcd(k-j,n)=1$, we have $\gcd(s,n)=1$. Then $\beta\alpha^k=ba^s$. We have $S=\{a,a^{-1},b,ba^s\}$ and $\Gamma=Cay(\D_{2n}=\langle a,b\rangle,S=\{a,a^{-1},b,ba^s\})$.

The $\{a,a^{-1}\}$-edges in $\Gamma$ form one cycle $C_{a,\alpha}$ of length $n$ on $\langle\alpha\rangle$ and the other cycle $C_{a,\beta}$ of length $n$ on $\beta\langle\alpha\rangle$. Consider the cycle $C_{a}=(a^{-1},1,b,ba^{-1})$ of length four. Then $H_a=(C_{a,\alpha}\cup C_{a,\beta})\Delta C_a$ is a Hamilton cycle of $\Gamma$. Let $H_b$ be obtained from $\Gamma$ by deleting edges of $H_a$. The edges of $H_b$ are as follows: $\{a^k,ba^k\}$ for $1\leqslant k\leqslant n-2$, $\{a^k,ba^{k+s}\}$ for $0\leqslant k\leqslant n-1$, $\{1, a^{-1}\}$ and $\{b,ba^{-1}\}$. Then $H_b$ is a regular graph of degree two. So we only need to show $H_b$ is connected.

Since $\gcd(s,n)=1$, there exist integers $u$ and $v$ such that $us+vn=1$. Then $n\nmid u$, and so we can write $-u=qn+r$ where $q$ is an integer and $0<r<n$. We have $a^{-1}=a^{vn-1}=a^{-us}=a^{(qn+r)s}=a^{rs}$, i.e., $a^{rs+1}=1$. Let $t=n-r$. Then $0<t<n$ and $a=a^{1+sn}=a^{1+s(r+t)}=a^{1+rs}a^{ts}=a^{ts}$, i.e., $a^{ts-1}=1$. Since $\gcd(s,n)=1$, we know $r=\min\{m>0\mid a^{ms+1}=1\}$ and $t=\min\{m>0\mid a^{ms-1}=1\}$.

By the edges in $H_b$, we know $1$ is connected to \begin{center}$1,ba^s,a^s,ba^{2s},a^{2s},\ldots,ba^{(r-1)s},a^{(r-1)s},ba^{rs}=ba^{-1}$,\end{center} and $a^{-1}$ is connected to \begin{center}$a^{-1},ba^{s-1},a^{s-1},ba^{2s-1},a^{2s-1},\ldots,ba^{(t-1)s-1},a^{(t-1)s-1},ba^{ts-1}=b$.\end{center} These vertices are exactly all the vertices of $\Gamma$. Since $\{1, a^{-1}\}$ and $\{b,ba^{-1}\}$ are two edges in $H_b$, we get that $H_b$ is connected and so it is a Hamilton cycle of $\Gamma$. Thus edges of $\Gamma$ is partitioned into two Hamilton cycles $H_a$ and $H_b$, i.e., $\Gamma$ has a Hamilton decomposition.
\end{proof}


\begin{lemma}\label{lem two involutions}
Let $p\geqslant 3$ be an odd prime and let $\Gamma=Cay(\D_{2p},S)$ such that $S\subseteq \D_{2p}$, $1\not\in S$, $S=S^{-1}$ and $\langle S\rangle=\D_{2p}$. Let $S_2=S\cap \beta\langle\alpha\rangle$. If $|S_2|=2$, then $\Gamma$ has a Hamilton decomposition.
\end{lemma}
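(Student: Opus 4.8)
The plan is to construct $s+1$ edge-disjoint Hamilton cycles, where $|S_1|=2s$ (even, since for $p$ odd no rotation is an involution) and $S_2=\{\beta\alpha^j,\beta\alpha^k\}$ with $0\leqslant j<k\leqslant p-1$; write $S_1=\{\alpha^{i_1},\alpha^{p-i_1},\ldots,\alpha^{i_s},\alpha^{p-i_s}\}$ with $1\leqslant i_1<\cdots<i_s\leqslant\frac{p-1}{2}$ and $R_t=\{\alpha^{i_t},\alpha^{p-i_t}\}$. Since $|S|=2s+2$ is even, this is exactly a Hamilton decomposition. First I would note that, as $0<k-j<p$ and $p$ is prime, the two involutions $\beta\alpha^j,\beta\alpha^k$ generate $\D_{2p}$, so $Cay(\D_{2p},S_2)$ is a connected $2$-regular graph, i.e.\ a single Hamilton cycle $H_0$; concretely its $\beta\alpha^k$-edges are the $p$ edges $\{\alpha^n,\beta\alpha^{k+n}\}$ ($n\in\mathbb Z_p$), which alternate with the $\beta\alpha^j$-edges around $H_0$. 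As in Lemmas~\ref{lem one involution} and~\ref{lem tetravalent}, each $R_t$ contributes two $p$-cycles $C_{t,\alpha}$ on $\langle\alpha\rangle$ and $C_{t,\beta}$ on $\beta\langle\alpha\rangle$.

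The idea is to turn each pair $C_{t,\alpha}\cup C_{t,\beta}$ into a Hamilton cycle by borrowing two $\beta\alpha^k$-edges from $H_0$ and repairing $H_0$ with the two rotation edges thereby freed. For suitable offsets $c_t$ I would take the four-cycle $D_t=(\alpha^{c_t},\alpha^{c_t+i_t},\beta\alpha^{k+c_t+i_t},\beta\alpha^{k+c_t})$, whose edges are one $R_t$-edge of $C_{t,\alpha}$, one $R_t$-edge of $C_{t,\beta}$, and two $\beta\alpha^k$-edges, and then set $H_t=(C_{t,\alpha}\cup C_{t,\beta})\Delta D_t$ for $1\leqslant t\leqslant s$ together with $H_{s+1}=H_0\Delta\bigl(\bigcup_{t=1}^sD_t\bigr)$. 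A direct check shows each $H_t$ ($t\leqslant s$) is a single $2p$-cycle, the two borrowed $\beta\alpha^k$-edges splicing $C_{t,\alpha}$ and $C_{t,\beta}$ together; an edge count then shows $H_1,\ldots,H_{s+1}$ partition $E(\Gamma)$, the two $R_t$-edges deleted from $C_{t,\alpha}\cup C_{t,\beta}$ being exactly the rotation edges that $D_t$ inserts into $H_{s+1}$.

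Two points remain, the second being the crux. First, the $D_t$ must be pairwise vertex-disjoint, i.e.\ the $2s$ rotation-coordinates $\{c_t,c_t+i_t\}$ must be distinct in $\mathbb Z_p$; I would secure this by choosing $\{c_t,c_t+i_t\}=\{x_t,p-x_t\}$, using that as $x$ runs through $1,\ldots,\frac{p-1}{2}$ the pairs $\{x,p-x\}$ partition $\mathbb Z_p\setminus\{0\}$ and realize each of the differences $1,\ldots,\frac{p-1}{2}$ exactly once, so the subcollection indexed by the distinct $i_1,\ldots,i_s$ is automatically disjoint. Second, I must prove that $H_{s+1}$ is connected; this is the real obstacle. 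I would establish it by performing the swaps one at a time and showing, by induction on $r$, that $G_r:=H_0\Delta(D_1\cup\cdots\cup D_r)$ is a single cycle. Since the $D_t$ are vertex-disjoint, the two $\beta\alpha^k$-edges of $D_r$ survive in $G_{r-1}$ while the two rotation edges it inserts (lying on the vertex set $\{x_r,p-x_r\}$ left untouched by $D_1,\ldots,D_{r-1}$) are absent from $G_{r-1}$. Deleting the two vertex-disjoint $\beta\alpha^k$-edges breaks the cycle $G_{r-1}$ into two paths, and the only configuration in which the two inserted edges fail to rejoin the paths into a single cycle is the one in which those inserted edges were already edges of $G_{r-1}$ — which we have just excluded. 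Hence every swap preserves Hamiltonicity, so $H_{s+1}$ is a Hamilton cycle and $\Gamma$ has a Hamilton decomposition (the degenerate case $s=0$ being simply $\Gamma=H_0$).
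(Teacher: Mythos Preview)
Your overall strategy matches the paper's: the $S_2$-edges form a Hamilton cycle $H_0$, and for each $t$ you swap a $4$-cycle $D_t$ to splice $C_{t,\alpha}\cup C_{t,\beta}$ into a Hamilton cycle while passing the displaced edges back into $H_0$. The disjointness argument via symmetric pairs $\{x,p-x\}$ is fine.

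The gap is in your inductive step for $H_{s+1}$. The assertion that ``the only configuration in which the two inserted edges fail to rejoin the paths into a single cycle is the one in which those inserted edges were already edges of $G_{r-1}$'' is false. Write the four vertices of $D_r$ as $A=\alpha^{c_r}$, $A'=\beta\alpha^{k+c_r}$, $B=\alpha^{c_r+i_r}$, $B'=\beta\alpha^{k+c_r+i_r}$; the deleted edges are $AA'$ and $BB'$, the inserted edges are $AB$ and $A'B'$. If the cyclic order of these four vertices along $G_{r-1}$ is $A,A',B',B$, then deleting $AA',BB'$ leaves the two paths $A'\!-\!\cdots\!-\!B'$ and $B\!-\!\cdots\!-\!A$, and inserting $AB$ and $A'B'$ closes each path into its own cycle, producing two cycles even though neither inserted edge lay in $G_{r-1}$. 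So the swap $G_{r-1}\Delta D_r$ is a $2$-opt move, and whether it preserves Hamiltonicity depends on the cyclic order of $A,A',B,B'$ in $G_{r-1}$, which in turn depends on all the previous offsets $c_1,\ldots,c_{r-1}$ --- not merely on the new edges being absent.

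This is exactly the point where the paper does real work. It chooses the offsets so that the intervals $[m_t,m_t+i_t]\subset\{0,\ldots,p-1\}$ are pairwise either disjoint or nested (conditions~(\ref{eqn disjoint}) and~(\ref{eqn cover})), packing them into the two halves $\{0,\ldots,\tfrac{p-1}{2}\}$ and $\{\tfrac{p+1}{2},\ldots,p-1\}$ with a specific nesting pattern; this is what forces the ``good'' cyclic order at every swap and keeps $D$ a single cycle. Your symmetric-pair choice does not obviously fit this framework (for many $i_t$ the pair $\{c_t,c_t+i_t\}$ wraps around $p$), and in any case your stated argument does not verify the required cyclic-order condition. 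You need either to prove directly that your particular offsets give the correct order at each step, or to adopt an offset scheme like the paper's for which this can be checked.
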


\begin{proof}
We suppose $|S_2|=2$. Let $S_1=S\cap \langle\alpha\rangle$. Then $|S_1|$ is even. If $|S_1|=0$, then $\Gamma$ itself is a Hamilton cycle. If $|S_1|=2$, then by Lemma~\ref{lem tetravalent}, $\Gamma$ has a Hamilton decomposition. In the following we assume $|S_1|\geqslant 4$.

Let $S_2=\{b,c\}$ and let $a=bc$. Then $\D_{2p}=\langle S_2\rangle$ and $\langle a\rangle =\langle\alpha\rangle$. We may write $S_1=\{a^{i_1},a^{i_2},\ldots,a^{i_s},a^{p-i_s},\ldots,a^{p-i_2},a^{p-i_1}\}$ where $s\geqslant 2$ and $1\leqslant i_1<i_2<\cdots<i_s<p-i_s<\cdots<p-i_2<p-i_1\leqslant p-1$. We know $s+i_s\leqslant p-1$, and the $S_2$-edges form a Hamilton cycle $C$ of $\Gamma$. Let $1\leqslant t\leqslant s$ and $R_t=\{a^{i_t},a^{p-i_t}\}$. Then $\langle R_t\rangle=\langle\alpha\rangle$. The $R_t$-edges in $\Gamma$ form two cycles: one cycle $C_{t,\alpha}$ of length $p$ on $\langle\alpha\rangle$ and the other cycle $C_{t,\beta}$ of length $p$ on $\beta\langle\alpha\rangle$. Consider the cycle $C_{t}=(a^{m_t},a^{m_t+i_t},ba^{m_t+i_t},ba^{m_t})$ of length four where $0\leqslant m_t\leqslant p-1$. Then $H_t=(C_{t,\alpha}\cup C_{t,\beta})\Delta C_t$ is a Hamilton cycle of $\Gamma$. Note that the cycles $C_1,C_2,\ldots,C_t$ are vertex-disjoint. Let
\begin{equation*}
D=C\Delta\left(\bigcup\limits_{j=1}^sC_j\right).
\end{equation*}
Then $D$ is regular of degree two. The edges of $\Gamma$ is partitioned into Hamilton cycles $H_1,H_2,\ldots,H_s$ and an edge set $D$. If $D$ is also a Hamilton cycle of $\Gamma$, then $\Gamma$ has a Hamilton decomposition. In the following, we will choose suitable $m_1,m_2,\ldots,m_s$ such that $D$ is connected.

It is easy to see that $C\Delta C_t$ is a Hamilton cycle for $1\leqslant t\leqslant s$. 
Let $1\leqslant t\leqslant s$ and $1\leqslant r\leqslant s$ with $t\neq r$ satisfying one of the following conditions
\begin{equation}\label{eqn disjoint}
0\leqslant m_t<m_t+i_t<m_r<m_r+i_r\leqslant p-1,
\end{equation}
or
\begin{equation}\label{eqn cover}
0\leqslant m_t<m_r<m_r+i_r<m_t+i_t\leqslant p-1.
\end{equation}
Then $C_t$ and $C_r$ are vertex-disjoint, and so $C\Delta (C_t\cup C_r)=C\Delta C_t\Delta C_r$ is also a Hamilton cycle. 
If we can find $m_1,m_2,\ldots,m_s$ such that any two cycles $C_t$ and $C_r$ satisfying one of the two conditions as in Equations~(\ref{eqn disjoint}) and (\ref{eqn cover}), then $D$ is a Hamilton cycle of $\Gamma$. Now we give a suitable choice of $m_1,m_2,\ldots,m_s$ with this property. 

Note that $2\leqslant s\leqslant \frac{p-1}{2}$ and $i_j+2\leqslant i_{j+2}$ for $1\leqslant j<s$. Let $A=\{0,1,2,\ldots,\frac{p-1}{2}\}$ and $B=\{p-1,p-2,\ldots,\frac{p+1}{2}\}$. Then $A$ and $B$ form a partition of $\{0,1,2,\ldots,p-1\}$. Since $i_s\leqslant \frac{p-1}{2}=|A|-1$ and $i_{s-1}\leqslant \frac{p-3}{2}=|B|-1$, we may let $m_s\in A$ and $m_{s-1}\in B$. If $s=2k$ where $k\geqslant 1$, let
\begin{enumerate}
\item $m_{2h}=k-h$ for $1\leqslant h\leqslant k$ and
\item $m_{2h-1}=\frac{p+1}{2}+(k-h)$ for $1\leqslant h\leqslant k$,
\end{enumerate}
then we have
\begin{eqnarray*}
&&0\leqslant m_s<m_{s-2}<\cdots< m_{2h}<\cdots< m_2<\\
&&  m_2+i_2<\cdots< m_{2h}+i_{2h}<\cdots<m_{s-2}+i_{s-2}<m_s+i_s\leqslant \frac{p-1}{2}\text{ and }\\
&&\frac{p+1}{2}\leqslant m_{s-1}<m_{s-3}<\cdots< m_{2h-1}<\cdots<m_1<\\
&&  m_1+i_1<\cdots< m_{2h-1}+i_{2h-1}<\cdots<m_{s-3}+i_{s-3}<m_{s-1}+i_{s-1}\leqslant p-1,
\end{eqnarray*}
and so any pair of the cycles $C_1,C_2,\ldots,C_s$ satisfying one of the conditions in Equations~(\ref{eqn disjoint}) and (\ref{eqn cover}), which implies $D$ is a Hamilton cycle of $\Gamma$. If $s=2k+1$ where $k\geqslant 1$, let
\begin{enumerate}
\item $m_{2h+1}=k-h$ for $0\leqslant h\leqslant k$ and
\item $m_{2h}=\frac{p+1}{2}+(k-h)$ for $1\leqslant h\leqslant k$,
\end{enumerate}
then we have
\begin{eqnarray*}
&&0\leqslant m_s<m_{s-2}<\cdots< m_{2h+1}<\cdots< m_1<\\
&&  m_1+i_1<\cdots< m_{2h+1}+i_{2h+1}<\cdots<m_{s-2}+i_{s-2}<m_s+i_s\leqslant \frac{p-1}{2}\text{ and }\\
&&\frac{p+1}{2}\leqslant m_{s-1}<m_{s-3}<\cdots< m_{2h}<\cdots<m_2<\\
&&  m_2+i_2<\cdots< m_{2h}+i_{2h}<\cdots<m_{s-3}+i_{s-3}<m_{s-1}+i_{s-1}\leqslant p-1,
\end{eqnarray*}
and so any pair of the cycles $C_1,C_2,\ldots,C_s$ satisfying one of the conditions in Equations~(\ref{eqn disjoint}) and (\ref{eqn cover}), which implies $D$ is a Hamilton cycle of $\Gamma$. We complete the proof.
\end{proof}






Now we give the proof of Theorem~\ref{thm D2p is HD}.

\begin{proof}[Proof of Theorem~\ref{thm D2p is HD}.]
The case $p=2$ is easy to get. In the following we only consider odd prime $p\geqslant 3$. Let $p\geqslant 3$ be an odd prime and let $\Gamma=Cay(\D_{2p},S)$ such that $S\subseteq \D_{2p}$, $1\not\in S$, $S=S^{-1}$ and $\langle S\rangle=\D_{2p}$. We need to show that $\Gamma$ has a Hamilton decomposition. Let $S_1=S\cap \langle\alpha\rangle$ and $S_2=S\cap \beta\langle\alpha\rangle$. Then $|S_1|$ is even and $|S_2|\geqslant 1$. By Lemma~\ref{lem one involution}, Lemma~\ref{lem all involution} and Lemma~\ref{lem two involutions}, we may assume $|S_1|\geqslant 2$ and $r=|S_2|\geqslant 3$.

First we suppose $r$ is even. Let $A$ and $B$ be a partition of $S_2$ such that $|A|=2$ and $|B|=r-2$. Let $T=S_1\cup A$. Then $T$ and $B$ form a partition of $S$. By Lemma~\ref{lem two involutions}, the $T$-edges is partitioned into Hamilton cycles. Note that $|B|$ is even, so the $B$-edges is partitioned into Hamilton cycles. Hence $\Gamma$ has a Hamilton decomposition.

Now we let $r$ be odd. Take $a\in S_2$. Let $P=S_1\cup \{a\}$ and let $R=S_2\setminus \{a\}$. Then $P$ and $R$ form a partition of $S$. By Lemma~\ref{lem one involution}, the $P$-edges is partitioned into Hamilton cycles and a perfect matching. Note that $|R|$ is even, so the $R$-edges is partitioned into Hamilton cycles. Hence $\Gamma$ has a Hamilton decomposition.
\end{proof}


%



%
%

\end{document}